\allowdisplaybreaks \pagestyle{myheadings}\markboth{Divergence equation and Poincar\'e inequality}
\def\rr{{\mathbb R}}
\def\rn{{{\rr}^n}}
\def\fz{\infty}
\def\supp{{\mathop\mathrm{\,supp\,}}}
\def\dist{{\mathop\mathrm {\,dist\,}}}
\def\diam{{\mathop\mathrm {\,diam\,}}}
\def\loc{{\mathop\mathrm{\,loc\,}}}
\def\lz{\lambda}
\def\ro{\rho}
\def\gz{{\gamma}}
\def\sz{\sigma}
\def\nz{\nabla}
\def\r{\right}
\def\lf{\left}
\newtheorem{thm}{Theorem}[section]
\newtheorem{lem}{Lemma}[section]
\newtheorem{prop}{Proposition}[section]
\newtheorem{rem}{Remark}[section]
\newtheorem{cor}{Corollary}[section]
\newtheorem{defn}{Definition}[section]
\newtheorem{example}{Example}[section]
\numberwithin{equation}{section}
\begin{document}
\arraycolsep=1pt
\author{Renjin Jiang, Aapo Kauranen and Pekka Koskela} \arraycolsep=1pt
\title{\Large\bf Solvability of the divergence equation implies John \\ via Poincar\'e inequality}
\date{ }
\maketitle
\begin{center}
\begin{minipage}{10cm}\small
{\noindent{\bf Abstract.} Let $\Omega \subset \rr^2$ be a bounded
simply connected domain.
We show that, for a fixed (every) $p\in (1,\fz),$
the divergence equation $\mathrm{div}\,\mathbf{v}=f$ is solvable in
$W^{1,p}_0(\Omega)^2$ for every $f\in L^p_0(\Omega)$,
if and only if $\Omega$ is a John domain, if and only if the weighted
Poincar\'e inequality
$$\int_\Omega|u(x)-u_{\Omega}|^q\,dx\le C\int_\Omega|\nabla u(x)|^q\dist(x,\partial \Omega)^q\,dx$$
holds for some (every) $q\in [1,\fz)$. In higher dimensions similar results
are proved under some additional assumptions on the domain in question.
 }\end{minipage}
\end{center}
\vspace{1.0cm}

\section{Introduction}

\hskip\parindent This paper is devoted to the study of geometric
aspects of the solvability of the divergence equation. Our main tool is a
weighted Poincar\'e inequality.

 Let $\Omega$ be a bounded domain in $\rr^n$, $p\in (1,\fz)$,
and $L^p_0(\Omega)$ be the space of
all functions in $L^p(\Omega)$ which have integral zero over $\Omega$.
The Sobolev space $W^{1,p}(\Omega)$  is defined as
$$W^{1,p}(\Omega):=\lf\{u\in L^p(\Omega):\,\nabla u \in \mathscr{D}'(\Omega)\cap L^p(\Omega)\r\}$$
with the norm
$$\|u\|_{W^{1,p}(\Omega)}:=\|u\|_{L^p(\Omega)}+\|\nabla u\|_{L^p(\Omega)}.$$
The Sobolev space $W^{1,p}_0(\Omega)$ is then defined to be the closure of
smooth functions
with compact support in $\Omega$ under the  $W^{1,p}$-norm.

For $f\in L^p_0(\Omega)$, a vector function $\mathbf{v}=(v_1,\cdots,v_n)\in
L^p(\Omega)^n$ is a
solution to the divergence equation
$\mathrm{div}\,\mathbf{v}=f$, if
$$\int_\Omega \mathbf{v}(x)\cdot \nabla g(x)\,dx=-\int_\Omega f(x)g(x)\,dx$$
holds for each $g\in W^{1,q}(\Omega)$, where $q$ is the H\"older conjugate number of $p$.
We say that {\it the divergence equation with Dirichlet boundary condition} ($\mathrm{div}_{p,0}$, for short)
is solvable, if for each $f\in L^p_0(\Omega)$, there exists
$\mathbf{v}\in W^{1,p}_0(\Omega)^n$ such that
$\mathrm{div}\,\mathbf{v}=f$ holds in the above distributional sense,
and there exists $C>0$, independent of $f$ such that
$$\|\mathbf{v}\|_{W^{1,p}(\Omega)^n}\le C\|f\|_{L^p(\Omega)}.$$


When $\Omega$ has a Lipschitz boundary, it is well known that the divergence
equation ${\mathrm{div}_{p,0}}$ is solvable
for all $p\in (1,\fz)$. There are several ways to prove this result, for
instance, it
can be proved via Functional Analysis, or via elementary constructions; see \cite{asv88,art05,bb03,dl76,ne67}.
Recently, Acosta et al. \cite{adm06} proved that ${\mathrm{div}_{p,0}}$ is solvable on John domains
for all $p\in (1,\fz)$ via a constructive approach.

On the other hand, if $\Omega$ has an external cusp, it is known that
the divergence equation ${\mathrm{div}_{2,0}}$ is not solvable in
$\Omega$; see \cite{adm06}.

Notice that $p=1$ or $p=\fz$, the divergence
equation ${\mathrm{div}_{p,0}}$ does not necessarily admit
a solution in $W^{1,p}(\Omega)^n$  for $p=1$ or for $p=\fz$, even when $\Omega$ is a cube; see \cite{bb03}.


It is natural to ask for necessary geometric conditions  for the solvability
of the divergence equation ${\mathrm{div}_{p,0}}$  for some (all)
$p\in (1,\fz)$.  For domains satisfying a separation property introduced by
Buckley and Koskela \cite{bk95} (see Section 2 for the definition), it was
shown by Acosta et al.
\cite{adm06} that the divergence
equation ${\mathrm{div}_{p,0}}$ is solvable for $p\in (1,n)$, if and only if
$\Omega$ is a John domain.
Our result extends this to the case  $p>n$, and to the case $p=n$ in some
special cases.

Let us first recall the definition of a John domain. This terminology
was introduced  in \cite{ms79}, but these domains were studied already by F.
John \cite{j61}.

\begin{defn}[John domain]
A bounded domain $\Omega\subset \rn$ with a distinguished point
$x_0\in\Omega$ called a John domain if it satisfies the following
``twisted cone" condition: there exists a constant $C>0$ such that
for all $x\in\Omega$, there is a curve $\gz: [0,l]\to\Omega$
parametrised by arclength such that $\gz(0)=x$, $\gz(l)= x_0$, and
$d(\gz(t),\rn\setminus\Omega)\ge Ct$ for all $0\le t\le l.$
\end{defn}

Observe that each Lipschitz domain is a John domain. Moreover,
the boundary of a (planar) John domain may contain an interior cusp, while
exterior cusps are ruled out.

For a mapping $\mathbf{v}=(v_1,v_2,\cdots, v_n)\in W^{1,1}_\loc(\Omega)^n$,
let $D\mathbf{v}$ denote its weak differential. For $x\in \Omega$, we denote
by $\ro(x)$  the distance from $x$
to the boundary $\partial \Omega$, i.e.,  $\ro(x):=\dist(x,\partial \Omega)$.

Our main result is the following theorem.

\begin{thm}\label{t1.1}
Let $\Omega \subset \rr^n$ be a bounded domain that
satisfies the separation property, $n\ge 2$. Then the following conditions
are equivalent:

(i) $\Omega$ is a  John domain;

(ii) for some (every) $p\in (1,n)\cup (n,\fz)$ and each $f\in L^p_0(\Omega)$,
there exists a solution $\mathbf{v}\in W^{1,p}_0(\Omega)^n$
to the equation $\mathrm{div}\,\mathbf{v}=f$ with
$$\|\mathbf{v}\|_{W^{1,p}(\Omega)^n}\le C\|f\|_{L^p(\Omega)};$$

(iii) for some (every) $p\in (1,\fz)$ and each $f\in L^p_0(\Omega)$,
there exists a solution $\mathbf{v}\in W^{1,p}_0(\Omega)^n$
to the equation $\mathrm{div}\,\mathbf{v}=f$ with
$$\lf\|\frac{\mathbf{v}}{\ro}\r\|_{L^p(\Omega)^n}+\|D\mathbf{v}\|_{L^p(\Omega)^{n\times n}}\le C\|f\|_{L^p(\Omega)};$$

(iv) for some (every) $p\in (1,\fz)$ and each $f\in L^p_0(\Omega)$, there
exists a solution $\mathbf{v}\in L^p(\Omega)^n$
to the equation $\mathrm{div}\,\mathbf{v}=f$ with
$$\lf\|\frac{\mathbf{v}}{\ro}\r\|_{L^p(\Omega)^n}\le C\|f\|_{L^p(\Omega)}.$$
\end{thm}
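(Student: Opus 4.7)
The strategy is to close the cycle
\[
(i) \Rightarrow (iii) \Rightarrow (ii) \Rightarrow (iv) \Rightarrow (i),
\]
with the side implication $(iii)\Rightarrow (iv)$ used to cover the excluded exponent $p=n$ of (ii). Three of these implications are direct: $(iii)\Rightarrow (iv)$ is immediate, while for $(iii)\Rightarrow (ii)$ one approximates $\mathbf{v}$ by $\mathbf{v}\chi_k$, where $\chi_k$ is a cut-off vanishing within distance $k^{-1}$ of $\pa\Omega$; the ``bad'' gradient term $\mathbf{v}\nabla\chi_k$ is controlled by $\|\mathbf{v}/\ro\|_p$ up to a vanishing factor, placing $\mathbf{v}$ in $W^{1,p}_0(\Omega)^n$. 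For $(ii)\Rightarrow (iv)$ one invokes the Hardy inequality $\|\mathbf{v}/\ro\|_p \le C\|\nabla\mathbf{v}\|_p$ for $\mathbf{v}\in W^{1,p}_0$, which holds on domains satisfying the separation property as long as $p\neq n$ (this is exactly why (ii) excludes $p=n$). The starting implication $(i)\Rightarrow (iii)$ builds on the Bogovskii-type construction of Acosta--Dur\'an--Muschietti \cite{adm06} on John domains: their Whitney-chain decomposition bounds $\mathbf{v}$ pointwise by $\ro$ times a modulated maximal function of $f$, yielding $\|\mathbf{v}/\ro\|_p+\|D\mathbf{v}\|_p\le C\|f\|_p$ for every $p\in(1,\fz)$.

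The core of the paper is the direction $(iv)\Rightarrow (i)$, which I would route through the weighted Poincar\'e inequality stated in the abstract. By duality, $(iv)$ with exponent $p$ yields the weighted Poincar\'e inequality with exponent $q = p'$: for $g\in W^{1,q}(\Omega)$ and $h\in L^p_0(\Omega)$ with $\|h\|_p\le 1$, picking a divergence-equation solution $\mathbf{v}_h$ from (iv),
\[
\lf|\int_\Omega (g-g_\Omega)\,h\,dx\r| = \lf|\int_\Omega \mathbf{v}_h\cdot\nabla g\,dx\r| \le \|\mathbf{v}_h/\ro\|_p\,\|\ro\nabla g\|_q \le C\|\ro\nabla g\|_q;
\]
taking the supremum over admissible $h$ gives $\|g-g_\Omega\|_q \le C\|\ro\nabla g\|_q$.

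It then remains to show that the weighted Poincar\'e inequality, combined with the separation property, forces $\Omega$ to be a John domain; this is the main obstacle and the technical heart of the paper. I would argue by contradiction in the spirit of Buckley--Koskela \cite{bk95}: if $\Omega$ fails to be John, then iterating the separation property produces, at arbitrarily small scales, two subdomains of comparable volume joined by a thin neck across which $\ro$ is small. Plugging a smooth step-like function across such a neck into the weighted Poincar\'e inequality, the weight $\ro$ absorbs the gradient and forces $\|\ro\nabla g\|_q$ to zero, while the $L^q$-oscillation $\|g-g_\Omega\|_q$ stays bounded below, yielding a contradiction. The delicate point is quantifying this neck-scale versus weight-scale bookkeeping using only the separation data so that the contradiction is sharp enough at the critical scale; this is where the weighted nature of the inequality genuinely changes the calibration relative to the classical Buckley--Koskela argument.
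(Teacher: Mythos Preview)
Your overall architecture matches the paper's, and the key step $(iv)\Rightarrow(i)$ via duality to the weighted Poincar\'e inequality $(P_{p'})$ followed by a Buckley--Koskela style argument is exactly what the paper does (Theorem~2.1 and Proposition~2.1). Two corrections, one of them substantive:

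\textbf{The substantive gap: $(ii)\Rightarrow(iv)$ for $p<n$.} Your claim that the Hardy inequality $\|\mathbf v/\ro\|_p\le C\|\nabla\mathbf v\|_p$ holds on domains with the separation property whenever $p\neq n$ is false for $p<n$. The separation property imposes no thickness on $\Omega^c$; for instance $\Omega=B^3(0,2)\setminus L$ with $L$ a line segment is John (hence has the separation property), yet for $p=2<3$ the segment has zero $2$-capacity, so $W^{1,2}_0(\Omega)=W^{1,2}_0(B)$ contains functions equal to $1$ near $L$, and for these $\int|\mathbf v|^2/\ro^2=\infty$. The paper accordingly \emph{does not} route $(ii)\Rightarrow(i)$ through (iv) when $p<n$; it quotes \cite{adm06} directly, where the Sobolev--Poincar\'e characterization (not Hardy) is used. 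Hardy is invoked only for $p>n$ (Lemma~4.1), where it holds on \emph{every} bounded domain, and then your duality computation goes through verbatim.

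\textbf{A minor point: $(iii)\Rightarrow(ii)$.} No cut-off approximation is needed: in (iii) the solution $\mathbf v$ is already assumed to lie in $W^{1,p}_0(\Omega)^n$, and since $\Omega$ is bounded one has $\|\mathbf v\|_p\le \diam(\Omega)\,\|\mathbf v/\ro\|_p$, so the $W^{1,p}$-bound in (ii) is immediate from the estimate in (iii).

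Finally, the paper's proof that the weighted Poincar\'e inequality forces John (Proposition~2.1) is organized as a direct diameter estimate rather than a contradiction: using step-like test functions one first bounds $|T(2d)|\le Cd^n$ for any component $T$ of $\Omega\setminus B(w,d)$ missing a fixed ball, and then iteratively halves the measure via radii $r_j$ with $r_j-r_{j-1}\le C2^{-j/n}d$, summing to $\diam(T)\le Cd$. Your contradiction sketch captures the same test-function mechanism but is vaguer about how the weight $\ro^b$ is actually absorbed; the paper's two-case analysis (large $\ro$ forces geometric inclusion, small $\ro$ feeds directly into the weighted inequality) is what makes the bookkeeping go through for all Sobolev triples.
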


The meaning of ``some (every)'' in the statement above is that the given
existence result for any fixed $p$ in the given parameter range actually
implies the existence for every such $p,$ under the assumptions of the
theorem.

We have not been able to include the case $p=n$ in condition (ii).
The case $p<n$ is proved in \cite{adm06}
by using Sobolev inequalities for $W^{1,p}_0$; our approach for $p>n$ is
based on the fact that solutions in
 $W^{1,p}_0$ satisfy suitable Hardy inequalities.
In Example \ref{e4.1} below, we construct a John domain where
the divergence equation admits a solution in $W^{1,n}_0$, but the Hardy inequalities fail.
However, we can include the case $p=n$ in Theorem \ref{t1.1} (ii) provided
the complement of $\Omega$ is sufficiently thick on $\partial \Omega$; see Theorem 4.1 in Section 4.

Notice that each domain that is quasiconformally equivalent to a uniform
domain $G$ satisfies the separation property. In particular, each simply connected plane domain satisfies the separation property; see \cite{bk95}.

\begin{cor}\label{c1.1}
 Let $\Omega\subset \rr^2$ be a bounded simply connected domain. Then
for some (all) $p\in (1,\fz)$ and each $f\in L^p_0(\Omega)$, there exists a solution $\mathbf{v}\in W^{1,p}_0(\Omega)^2$
to the equation $\mathrm{div}\,\mathbf{v}=f$ such that
$$\|\mathbf{v}\|_{W^{1,p}(\Omega)^{2}}\le C\|f\|_{L^p(\Omega)}, $$
if and only if $\Omega$ is a John domain.
\end{cor}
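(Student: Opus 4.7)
The plan is to reduce the corollary to Theorem \ref{t1.1} together with the paper's own Theorem 4.1, with the exceptional exponent $p=n=2$ being the only point that requires additional care beyond what Theorem \ref{t1.1} supplies.

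First, one checks that the ambient hypotheses of both theorems are met. Bounded simply connected planar domains satisfy the separation property of Buckley--Koskela \cite{bk95}, as already noted in the paragraph preceding the corollary, so Theorem \ref{t1.1} is applicable. For Theorem 4.1 one needs the complement $\rr^2\setminus\Omega$ to be ``sufficiently thick on $\partial\Omega$'' in the sense made precise in Section 4. For a bounded simply connected planar domain this is classical: every boundary point is accumulated by a connected subset of $\rr^2\setminus\Omega$, and this yields a uniform lower bound for the logarithmic capacity of $B(x,r)\setminus\Omega$ relative to $B(x,2r)$ at each $x\in\partial\Omega$, which is the density assumption behind the $L^2$-Hardy inequality of Ancona and, in turn, behind Theorem 4.1.

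Second, one assembles the forward direction (John $\Rightarrow$ solvability). For $p\in(1,2)\cup(2,\fz)$ this is the implication (i) $\Rightarrow$ (ii) of Theorem \ref{t1.1}; for the critical exponent $p=2$ it is Theorem 4.1 applied under the thickness just verified. Together these produce a solution $\mathbf{v}\in W^{1,p}_0(\Omega)^2$ with $\|\mathbf{v}\|_{W^{1,p}}\ls\|f\|_{L^p}$ for every $p\in(1,\fz)$.

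Third, for the converse (solvability $\Rightarrow$ John), one splits according to whether the ``some $p$'' at which solvability is assumed belongs to $(1,2)\cup(2,\fz)$ or equals $2$. The former case is immediate from (ii) $\Rightarrow$ (i) of Theorem \ref{t1.1}. In the latter case, the $W^{1,2}_0$ solution $\mathbf{v}$ satisfies the $L^2$-Hardy inequality $\|\mathbf{v}/\ro\|_{L^2}\ls\|D\mathbf{v}\|_{L^2}$ because the thickness of $\rr^2\setminus\Omega$ guarantees this inequality on $W^{1,2}_0(\Omega)$. Hence condition (iii) of Theorem \ref{t1.1} holds at $p=2$, and then the implication (iii) $\Rightarrow$ (i) of Theorem \ref{t1.1} (which is stated for every $p\in(1,\fz)$) yields that $\Omega$ is John. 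The main obstacle I expect is verifying the $L^2$-Hardy inequality, or equivalently the required form of thickness of the complement, for bounded simply connected planar domains in exactly the shape used by Theorem 4.1; all other steps are routine reductions, and this one rests on classical planar conformal-capacity estimates that make the complement of a simply connected planar domain uniformly thick near its boundary.
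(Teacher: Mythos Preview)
Your proposal is correct and follows essentially the same route as the paper: verify the separation property via \cite{bk95}, verify thickness of the complement, and then appeal to Theorem 4.1. The paper's execution is just a bit more direct: it checks the specific Hausdorff-content hypothesis $\mathscr{H}^1_\infty(\Omega^c\cap B(w,r))\ge Cr$ (immediate since $\Omega^c$ is connected and unbounded) rather than routing through logarithmic capacity, and it invokes Theorem 4.1 alone for all $p\in(1,\infty)$ at once---your separate treatment of $p\neq 2$ via Theorem \ref{t1.1} and of $p=2$ via Hardy plus (iii)$\Rightarrow$(i) is correct but superfluous, since Theorem 4.1 already covers every $p$.
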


For $p=2$, by duality, the solvability of  the divergence equation with Dirichlet boundary
condition is  equivalent to the fact
\begin{equation}\label{1.1}
\|f\|_{L^2(\Omega)}\le C\|\nz f\|_{W^{-1,2}_0(\Omega)^2}
\end{equation}
for each $f\in L^2_0(\Omega)$; see \cite{adm06} for instance.
From Corollary \ref{c1.1} it follows that on  a bounded simply connected domain $\Omega\subset \rr^2$,
for each $f\in L^2_0(\Omega) $, \eqref{1.1} holds if and only if $\Omega$ is a John domain.

Our main tool is the equivalence between the John condition and weighted Poincar\'e
inequalities; see Theorem \ref{t2.1} below. To prove that solvability of the divergence
 equation ${\mathrm{div}_{p,0}}$ implies John, Acosta et al. \cite{adm06} used the characterization
of Sobolev-Poincar\'e inequality from Buckley and Koskela \cite{bk95}.
As the Sobolev-Poincar\'e inequality only holds for $p\in [1,n),$ the authors were not able to deal with the case $p\geq n$ in the necessity of the John condition.
To bypass this problem we generalize Buckley and Koskela's characterization
to the weighted setting. Precisely, the following special case of Theorem \ref{t2.1} below says that,
 for a domain $\Omega\subset \rn$ satisfying the separation property,
the weighted Poincar\'e inequality
$$\int_\Omega|u(x)-u_{\Omega}|^p\,dx\le C\int_\Omega|\nabla u(x)|^p\ro(x)^p\,dx$$
holds for some (all) $p\in [1,\fz)$, if and only if $\Omega$ is a John domain.
Using this together with the fact that solutions satisfy Hardy type inequalities, we obtain the desired result.

The paper is organized as follows. In Section 2, we show that our weighted Poincar\'e inequality
implies the John condition. In Section 3, we study the divergence equation on John domains, and
the main result Theorem \ref{t1.1} is proved in Section 4.

 Throughout the paper, we denote by $C$ positive constants which
are independent of the main parameters, but which may vary from
line to line. For $p\in [1,n)$, its Sobolev conjugate $\frac{np}{n-p}$ is denoted by $p^\ast$;
for each $p\in (1,\fz)$, its H\"older conjugate $\frac{p}{p-1}$ is denoted by $p'$.
Corresponding to to a function space $X$, we denote its $n$-vector-valued
analogs by $X^n$.

\section{The weighted Poincar\'e inequality}
\hskip\parindent
In this section, we give a generalization of Buckley and Koskela's characterization from
  \cite{bk95}, which offers us the main tool for proving Theorem \ref{t1.1}.

Let $\Omega\subset \rn$ be a bounded domain, $n\ge 2$. We say that
the  Sobolev-Poincar\'e inequality $(SP_{p,p^\ast}),$  $p\in [1,n)$,
holds if there is a $C>0$ such that for every $u\in C^\fz(\Omega)$ we have that
$$\lf(\int_\Omega |u(x)-u_{\Omega}|^{p^\ast}\,dx\r)^{1/p^\ast}
\le C\lf(\int_\Omega |\nabla u(x)|^p\,dx\r)^{1/p}.\leqno(SP_{p,p^\ast})$$
Above, $u_{\Omega}$ denotes the integral average of
$u$ on $\Omega$, i.e., $u_{\Omega}=\frac{1}{|\Omega|}\int_\Omega u\,dx$.

For $\Omega$ satisfying the separation property (see Definition \ref{d2.1} below),
Buckley-Koskela \cite{bk95} have shown that it is a John domain if and only if $\Omega$ supports a
 Sobolev-Poincar\'e inequality $(SP_{p,p^\ast})$ for some (all) $p\in [1,n).$

Let us first recall the definition of separation property which was introduced in \cite{bk95,bk96}.
Recall that for each $x\in \Omega$, $\ro(x)=d(x,\rn\setminus \Omega)$.

\begin{defn}[separation property]\label{d2.1}
We say that a domain $\Omega \subset\rn$ with a distinguished point
$x_0$ has a separation property if there is a constant $C_s$ such that the following
holds: For each $x\in\Omega$ there is a curve $\gz:[0, 1]\to \Omega$ with $\gz(0)= x$,
$\gz(1)= x_0$, and such that for each $t$ either $\gz([0, t])\subset B:= B(\gz(t), C_s\ro(\gz(t)))$ or
each $y\in \gz([0, t]) \setminus B$ belongs to a different component of $\Omega\setminus \partial B$ than $x_0$.
\end{defn}

It follows from  \cite{bk95} that $\Omega$ has a separation property if it
is quasiconformally equivalent to a uniform domain $G$. In particular, each simply
connected planar domain satisfies a separation property.

\begin{thm}\label{t2.1}
 Let $\Omega\subset \rn$ be a bounded domain satisfying the separation property, $n\ge 2$.
Then the $(P_{p,q,b})$-Poincar\'e inequality holds, i.e., for every $u\in C^\fz(\Omega)$ we have that
$$\lf(\int_\Omega|u(x)-u_{\Omega}|^q\,dx\r)^{p/q}\le C_0\int_\Omega|\nabla u(x)|^p\ro(x)^b\,dx,\leqno(P_{p,q,b})$$
for some (all) $(p,q,b)$ satisfying $1\le p\le q<\fz$, $\frac nq+1-\frac np\ge 0$
and $b=p(\frac nq+1-\frac np)$, if and only if $\Omega$ is a John domain.
\end{thm}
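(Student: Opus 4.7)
The plan is to prove the equivalence in Theorem~\ref{t2.1} in two directions, with the sufficiency ``John implies $(P_{p,q,b})$'' handled by a Whitney-chain argument in the spirit of Boman, and the necessity ``$(P_{p,q,b})$ implies John'' by adapting the test-function technique of Buckley--Koskela~\cite{bk95} to the weighted setting. For the sufficiency, I would take a Whitney decomposition $\{Q_j\}$ of $\Omega$ and fix a central cube $Q_0$ near $x_0$. John domains admit a Boman-type chain property: for each Whitney cube $Q$ there is a chain $Q=Q_1,Q_2,\ldots,Q_{k(Q)}=Q_0$ of Whitney cubes with bounded overlap and essentially geometric growth of side lengths along the chain. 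The standard pointwise telescoping bound
$$|u(x)-u_{Q_0}|\le C\sum_{j}\chi_{Q_j^\ast}(x)\,\ell(Q_j)\,\frac{1}{|Q_j^\ast|}\int_{Q_j^\ast}|\nabla u(y)|\,dy,$$
combined with H\"older's inequality and the comparability $\rho(x)\approx \ell(Q_j)$ on $Q_j^\ast$, then yields $(P_{p,q,b})$; note that the value $b=p(\frac{n}{q}+1-\frac{n}{p})$ is the unique one compatible with the scaling $x\mapsto \lambda x$, so the choice of exponent is forced.

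For the necessity direction, argue by contradiction. Assuming $\Omega$ has the separation property but is not John, produce, for each large $k$, a witness point $z_k$ and (using the separation property) a separating ball $B_k=B(w_k,C_s\rho(w_k))$ whose boundary sphere cuts $\Omega$ into components, with $z_k$ lying in a component $U_k$ not meeting $x_0$ and with $\rho(w_k)$ decaying rapidly in $k$. Take a Lipschitz test function $u_k$ equal to $0$ on the component containing $x_0$, equal to some value $a_k$ on $U_k$, and interpolated linearly across a collar of $\partial B_k\cap \Omega$ of thickness comparable to $\rho(w_k)$. On this collar $|\nabla u_k|\approx a_k/\rho(w_k)$ and $\rho\lesssim \rho(w_k)$, so the right-hand side of $(P_{p,q,b})$ contributes of order $a_k^p\rho(w_k)^{n+b-p}=a_k^p\rho(w_k)^{np/q}$, while the left-hand side is at least of order $a_k^p|U_k|^{p/q}$ after a suitable normalization of $u_k$.

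The hard part is converting this into a genuine contradiction, since at a single scale the identity $n+b-p=np/q$ makes the inequality dimensionally tight. The resolution, following Buckley--Koskela, is to use the failure of the John condition to produce either an infinite chain of such separating configurations with rapidly shrinking radii $\rho(w_k)$, or a suitable disjoint union of them, and then to run a step-like test function that accumulates contributions over many scales. The cumulative weighted gradient integral on the right remains controllable, while the oscillation of $u$ about its mean on the left grows without bound, forcing the constant $C_0$ in $(P_{p,q,b})$ to be arbitrarily large. The key technical point is verifying that the weight $\rho^b$ does not absorb the geometric decay of $\rho(w_k)$; this is precisely what permits the full range $p\in[1,\infty)$ here, in contrast to the restriction $p<n$ in the unweighted Sobolev--Poincar\'e argument of \cite{bk95}.
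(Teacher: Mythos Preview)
Your sufficiency direction is fine and matches what the paper does (the paper simply cites \cite{km00,hk98} for the Boman/Whitney-chain argument you describe).

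For the necessity direction your outline diverges from the paper's, and the divergence is at exactly the point you flag as ``the hard part.'' You correctly observe that a single separating collar gives a scale-invariant estimate ($n+b-p=np/q$), but your proposed fix --- a contradiction via one step-like test function accumulating over an infinite chain of nested collars --- is not what the paper does, and you have not said how to ensure the cumulative weighted gradient integral stays bounded while the left side blows up; this is genuinely delicate because the collars need not be disjoint and their measures are not controlled a priori. The paper instead proves a \emph{direct quantitative} statement (Proposition~\ref{p2.1}): given a single separating ball $B(w,d)$ and a component $T$ of $\Omega\setminus B(w,d)$ missing $B_0$, one has $\diam(T)\le Cd$. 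The mechanism is an iterative measure-halving: first a single test function gives $|T(2d)|\le C d^n$ (Claim~1), then one chooses radii $r_0=2d<r_1<\cdots$ with $|T(r_j)|=2^{-j}|T(2d)|$ and proves $r_j-r_{j-1}\le C\,2^{-j/n}d$ (Claim~2), so the $r_j$ sum to $O(d)$. The proof of Claim~2 uses a dichotomy you do not mention: either some point of the annulus $A(r_{j-1},r_j)$ has $\rho$ larger than $C\,2^{-j/n}d$, in which case a volume argument forces the annulus to be thin, or $\rho\le C\,2^{-j/n}d$ throughout, in which case a \emph{separate} test function for that annulus, plugged into $(P_{p,q,b})$, gives the width bound. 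This breaks the scale invariance by exploiting the prescribed measure decay $|T(r_j)|=2^{-j}|T(2d)|$, and uses one simple test function per scale rather than one complicated accumulated function. Once Proposition~\ref{p2.1} is in hand, the John conclusion follows exactly as in \cite{bk95} via the separation property and the Martio--Sarvas/N\"akki--V\"ais\"al\"a criterion --- no contradiction argument is needed.
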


In what follows, we will call $(p,q,b)$ a {\it Sobolev triple}, if $(p,q,b)$
satisfies $1\le p\le q<\fz$, $\frac nq+1-\frac np\ge 0$
and $b=p(\frac nq+1-\frac np)$.

\begin{rem}\rm
Notice that if $(p,q,b)$ is a Sobolev triple, then $b\in [0,p]$. The two endpoint cases of $b$
are of particular interest.

When $b=0$, necessarily $p \in[1,n)$ and $q=p^\ast$;
then $(P_{p,q,b})$-Poincar\'e inequality is the Sobolev-Poincar\'e inequality
$(SP_{p,p^\ast})$.

When $b=p,$ $p$ equals $q$ and takes values in $[1,\fz)$; we then
denote $(P_{p,q,b})$-Poincar\'e inequality  by $(P_{p})$-Poincar\'e inequality for convenience.
The $(P_{p})$-Poincar\'e inequality is the main tool for us to prove
Theorem \ref{t1.1}; see Section 4 below.
\end{rem}

As each simply connected plane domain has a separation property,
the following is an immediate corollary to Theorem \ref{t2.1}.

\begin{cor}\label{c2.1}
 Let $\Omega\subset \rr^2$ be a bounded simply connected domain. Then
the $(P_{p,q,b})$-Poincar\'e inequality holds for some (all) Sobolev triples
 $(p,q,b)$,  if and only if $\Omega$ is a John domain.
\end{cor}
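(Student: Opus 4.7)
The plan is essentially to invoke Theorem \ref{t2.1} directly, so there is very little to do beyond verifying the hypothesis. The only geometric input needed is that every bounded simply connected planar domain satisfies the separation property in the sense of Definition \ref{d2.1}. This is exactly the fact recalled (with attribution to \cite{bk95}) in the paragraph preceding Corollary \ref{c1.1} and again after Definition \ref{d2.1}, namely that a domain quasiconformally equivalent to a uniform domain has the separation property, combined with the classical fact that each bounded simply connected plane domain is quasiconformally equivalent to the unit disk (which is uniform). So I would simply cite this and move on.

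With the separation property in hand, I would apply Theorem \ref{t2.1} with $n=2$. That theorem states that for a bounded domain in $\rn$ with the separation property, the $(P_{p,q,b})$-Poincar\'e inequality holds for some (equivalently, all) Sobolev triples $(p,q,b)$ if and only if $\Omega$ is a John domain. Taking $n=2$ gives exactly the equivalence claimed in Corollary \ref{c2.1}, and the ``some (all)'' phrasing is inherited verbatim from Theorem \ref{t2.1}.

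There is no real obstacle here, since the corollary is structurally just the specialization $n=2$ of Theorem \ref{t2.1} together with one recalled fact from \cite{bk95}. The only point one should be slightly careful about is to state clearly that the Sobolev triples in dimension two are those with $1\le p\le q<\infty$, $\tfrac{2}{q}+1-\tfrac{2}{p}\ge 0$, and $b=p\bigl(\tfrac{2}{q}+1-\tfrac{2}{p}\bigr)$, so that the reader sees the corollary is non-vacuous (for instance, $b=p$ with any $p\in[1,\infty)$ is always admissible). A one- or two-sentence proof is all that is needed.
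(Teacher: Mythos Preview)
Your proposal is correct and matches the paper's approach exactly: the paper also treats Corollary~\ref{c2.1} as an immediate consequence of Theorem~\ref{t2.1}, using only the cited fact from \cite{bk95} that every simply connected plane domain satisfies the separation property. No further argument is given or needed.
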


We will need the following characterization of a weighted  Poincar\'e inequality from
Haj{\l}asz and Koskela \cite[Theorem 1]{hk98} (for non-weighted cases see Maz'ya \cite{maz85}).

\begin{thm}[\cite{hk98}]\label{t2.2}
  Let $\Omega$ be a bounded domain in $\rr^n$, $n\ge 2$. Let
$1\le p\le q<\fz$ and  $b\ge 0$. Then the following conditions are equivalent.

(i) There exists a constant $C>0$ such that, for every $u\in C^\fz(\Omega)$ it holds that
$$\lf(\int_\Omega |u(x)-u_{\Omega}|^q\,dx\r)^{1/q} \le C\lf(\int_\Omega |\nabla u(x)|^p\ro(x)^{b}\,dx\r)^{1/p}.$$

(ii) For any fixed  cube $Q\subset\subset \Omega$, there exists a constant $C>0$ such that
$$\lf(\int_\Omega |u(x)|^q\,dx\r)^{1/q} \le C\lf(\int_\Omega |\nabla u(x)|^p\ro(x)^{b}\,dx\r)^{1/p}$$
whenever $u\in C^\fz(\Omega)$ satisfies $u|_Q=0$.
\end{thm}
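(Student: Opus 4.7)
The implication (i) $\Rightarrow$ (ii) is almost immediate. Given $u\in C^\infty(\Omega)$ with $u|_Q=0$, the cube average $u_Q$ vanishes, so by H\"older's inequality
$$|u_\Omega|=|u_Q-u_\Omega|=\Bigl|\frac1{|Q|}\int_Q(u-u_\Omega)\,dx\Bigr|\le |Q|^{-1/q}\|u-u_\Omega\|_{L^q(\Omega)}.$$
Combined with the triangle inequality $\|u\|_q\le\|u-u_\Omega\|_q+|u_\Omega|\,|\Omega|^{1/q}$, this reduces (ii) to (i) with a constant depending only on $|Q|,|\Omega|,q$.

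For (ii) $\Rightarrow$ (i), I first pass from $u_\Omega$ to the cube average $u_Q$ at the cost of a factor $2$, using that for any constant $c$ one has $|u_\Omega-c|\le |\Omega|^{-1/q}\|u-c\|_q$ and hence $\|u-u_\Omega\|_q\le 2\|u-u_Q\|_q$. Fix an enlargement $2Q\subset\subset\Omega$ and a cutoff $\eta\in C^\infty(\Omega)$ equal to $0$ on $Q$, equal to $1$ outside $2Q$, and with $|\nabla\eta|\lesssim 1$. The function $v:=\eta(u-u_Q)$ vanishes on $Q$, so applying (ii) gives
$$\|v\|_q\le C\|\nabla v\cdot\rho^{b/p}\|_p\le C\bigl(\|\nabla u\cdot\rho^{b/p}\|_p+\|(u-u_Q)\nabla\eta\cdot\rho^{b/p}\|_p\bigr).$$
The cutoff term is supported in $2Q\setminus Q$, where $\rho$ is bounded below by some $c_0>0$ depending only on $Q$ and $\Omega$; on the fixed cube $2Q$ the classical (weight-free) Sobolev--Poincar\'e inequality bounds $\|u-u_Q\|_{L^q(2Q)}$ by a multiple of $\|\nabla u\|_{L^p(2Q)}\le c_0^{-b/p}\|\nabla u\cdot\rho^{b/p}\|_p$. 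Since $\|u-u_Q\|_q\le\|v\|_q+\|(1-\eta)(u-u_Q)\|_q$ and the second piece is again supported in $2Q$, the triangle inequality yields (i).

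The main obstacle is the cutoff contribution $(u-u_Q)\nabla\eta$: one must arrange that on its support the weight $\rho^b$ is essentially a positive constant, so that a weight-free Sobolev--Poincar\'e inequality on a fixed interior cube can absorb it. This dictates the choice $2Q\subset\subset\Omega$ (rather than letting $Q$ approach $\partial\Omega$) and implicitly restricts attention to triples $(p,q,b)$ for which the unweighted inequality on a cube is available---exactly the Sobolev range used in the later applications to $(P_{p,q,b})$.
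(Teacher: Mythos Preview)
The paper does not prove Theorem \ref{t2.2}; it is quoted verbatim from Haj{\l}asz--Koskela \cite{hk98}, so there is no in-paper proof to compare against. Your argument is therefore being judged on its own merits.

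Your proof of (i)$\Rightarrow$(ii) is clean and correct. Your cutoff argument for (ii)$\Rightarrow$(i) is also correct, \emph{in the range you explicitly flag}: it requires the unweighted Sobolev--Poincar\'e inequality on the fixed interior cube $2Q$, hence needs $q\le p^\ast$ when $p<n$ (no restriction when $p\ge n$). Since the paper only invokes Theorem \ref{t2.2} for Sobolev triples $(p,q,b)$ with $b=p(\frac nq+1-\frac np)\ge 0$, i.e.\ exactly $q\le p^\ast$, your restricted proof already suffices for every later application.

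That said, as written you have not proved the theorem \emph{as stated}, which allows arbitrary $b\ge 0$ and $1\le p\le q<\infty$. The missing case $p<n$, $q>p^\ast$ is in fact vacuous, and you should say so rather than leave the reader to guess: fix a small ball $B\subset\subset\Omega$ disjoint from $Q$ and test either (i) or (ii) with $u\in C^\infty_c(B)$; since $\rho$ is bounded above and below on $B$, either inequality would force $\|u\|_{L^q(B)}\le C\|\nabla u\|_{L^p(B)}$ for all such $u$, which fails by scaling when $q>p^\ast$. Hence for those exponents neither (i) nor (ii) can hold, and the equivalence is trivial. Adding this one-line observation closes the gap and makes your proof cover the full statement.

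One cosmetic point: you write ``fix an enlargement $2Q\subset\subset\Omega$'', but $Q$ in (ii) is quantified universally, so you should say that you \emph{choose} $Q$ small enough that $2Q\subset\subset\Omega$; this is legitimate precisely because (ii) is assumed for every compactly contained cube.
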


The following result generalizes \cite[Theorem 2.1]{bk95}
to the setting of a weighted Sobolev-Poincar\'e inequality. Our proof follows the method of \cite{bk95}.

\begin{prop}\label{p2.1}
 Suppose that $(p,q,b)$ is a Sobolev triple,  and that
 $\Omega$ supports a $(P_{p,q,b})$-Poincar\'e inequality.
Fix a ball $B_0\subset \Omega$ and let $w\in \Omega$.
Then there exists a constant $C=C(C_0,n,p,q,\Omega,B_0)$ such that
 $$\diam(T)\le Cd$$
 whenever $T$ is a component of $\Omega\setminus B(w,d)$ that does not intersect $B_0$.
\end{prop}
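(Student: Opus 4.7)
My plan follows the Buckley--Koskela strategy of deriving, via the Poincar\'e inequality applied to a cutoff, a measure estimate on $T$, and then upgrading this to a diameter estimate using the separation property.

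Fix a cube $Q\subset\subset B_0$. By Theorem \ref{t2.2}, the $(P_{p,q,b})$-Poincar\'e inequality is equivalent to
\[
\|u\|_{L^q(\Omega)}\le C_1\Bigl(\int_\Omega|\nabla u|^p\ro(x)^b\,dx\Bigr)^{1/p}
\]
for every Lipschitz $u$ on $\Omega$ with $u|_Q=0$, where $C_1$ depends only on $C_0,n,p,q,\Omega,B_0$. It therefore suffices to produce a single well-chosen test function.

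Let $T^*$ be the component of $\Omega\setminus \overline{B(w,d/2)}$ containing $T$. We may assume $d\le\diam\Omega$ (otherwise the conclusion is trivial), and using $T\cap B_0=\emptyset$ together with the separation property we may arrange $Q\cap T^*=\emptyset$. Define
\[
u(x)=\eta(|x-w|)\,\chi_{T^*}(x),\qquad \eta(s)=\min\{1,\max\{0,(2s-d)/d\}\},
\]
which is Lipschitz with $|\nabla u|\le 2/d$ supported in $T^*\cap (B(w,d)\setminus B(w,d/2))$, satisfies $u=1$ on $T$, and vanishes on $Q$. Inserting $u$ into the tested Poincar\'e inequality yields
\[
|T|^{1/q}\le\|u\|_{L^q(\Omega)}\le \frac{2C_1}{d}\Bigl(\int_{B(w,d)\cap\Omega}\ro(x)^b\,dx\Bigr)^{1/p}.
\]
In the only non-trivial case $\ro(w)\le d$ one has $\ro(x)\le 2d$ on the support, and using the Sobolev-triple identity $(b+n)/p=n/q+1$ one obtains $|T|\le Cd^n$.

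The hard step is to pass from the measure bound $|T|\le Cd^n$ to the diameter bound $\diam T\le Cd$, which is where the separation property does real work (without it, $T$ could be an arbitrarily thin finger with large diameter and small measure). Pick $y\in T$ with $|y-w|$ close to $\sup_T|\cdot-w|$ and apply the separation property to $y$: the resulting curve $\gamma:[0,1]\to\Omega$ from $y$ to $x_0$ must first enter $B(w,d)$ at some time $t_0$. The dichotomy in Definition \ref{d2.1} at time $t_0$ gives either $y\in B(\gamma(t_0),C_s\ro(\gamma(t_0)))$ (whence $|y-w|\lesssim d$ directly, since $\ro(\gamma(t_0))\le d+\ro(w)\lesssim d$), or $y$ lies in a component of $\Omega\setminus\partial B(\gamma(t_0),C_s\ro(\gamma(t_0)))$ separated from $x_0$, in which case the same argument can be iterated on a smaller scale along $\gamma$. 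A telescoping of the resulting radii along $\gamma$, combined with the measure estimate $|T|\le Cd^n$ applied at each step, produces $\diam T\le Cd$. Making this chaining argument precise, and in particular verifying that $Q\cap T^*=\emptyset$ can always be arranged, is the main technical obstacle.
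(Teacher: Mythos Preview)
Your proposal has a genuine gap: Proposition~\ref{p2.1} does \emph{not} assume the separation property. That hypothesis enters only later, in the proof of Theorem~\ref{t2.1}, where Proposition~\ref{p2.1} is applied. So you are not entitled to invoke a curve $\gamma$ satisfying the dichotomy of Definition~\ref{d2.1}, and your entire ``hard step'' (the chaining/telescoping argument) is unavailable. You also use separation earlier, to arrange $Q\cap T^*=\emptyset$; this too is illegitimate, and in fact unnecessary: since $T$ is a connected component of $\Omega\setminus B(w,d)$, a cutoff that vanishes on $\partial B(w,d)$ extends by zero to a Lipschitz function on all of $\Omega$, and it vanishes on $B_0$ automatically because $T\cap B_0=\emptyset$.

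The idea you are missing is that the Poincar\'e inequality itself, applied \emph{repeatedly at nested scales}, controls the diameter without any help from separation. The paper first obtains the measure bound $|T(2d)|\le C_2 d^n$ essentially as you do (here $T(r):=T\setminus B(w,r)$; note that $B(w,d)$ disconnects $\Omega$, so $\rho\lesssim d$ on the relevant annulus). It then chooses radii $r_0=2d<r_1<r_2<\cdots$ so that $|T(r_j)|=2^{-j}|T(2d)|$, and for each $j$ applies the tested inequality to a cutoff that is $1$ on $T(r_j)$ and climbs linearly across the annulus $A(r_{j-1},r_j)$. This yields
\[
|T(r_j)|^{p/q}\le \frac{C_1}{(r_j-r_{j-1})^p}\int_{A(r_{j-1},r_j)}\rho(x)^b\,dx .
\]
A short dichotomy (either some point of $A(r_{j-1},r_j)$ has $\rho\gtrsim 2^{-j/n}d$, forcing $r_j-r_{j-1}\lesssim 2^{-j/n}d$ by a volume count, or $\rho\lesssim 2^{-j/n}d$ throughout, and then the displayed inequality together with $b=p(\tfrac nq+1-\tfrac np)$ gives the same bound) yields $r_j-r_{j-1}\le C_3\,2^{-j/n}d$. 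Summing over $j$ gives $\diam T\le Cd$. This iteration is precisely what neutralises the ``thin finger'' obstruction you identified, and it does so using only the Poincar\'e inequality.
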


\begin{proof}
From Theorem \ref{t2.2}, the $(P_{p,q,b})$-Poincar\'e inequality implies that
for each Lipschitz function $u$ that vanishes on $B_0$, it holds that
\begin{equation}\label{2.1}
\lf(\int_\Omega |u(x)|^q\,dx\r)^{p/q} \le C_1\int_\Omega |\nabla u(x)|^p\ro(x)^{b}\,dx,
\end{equation}
where $C_1=C(C_0,n,p,q,\Omega,B_0)$.

Let $T$ be a component of $\Omega\setminus B(w,d)$ that does not intersect $B_0$.
For each $r\ge d$, set $T(r):=T\setminus B(w,r)$; and for all $r>s\ge d$, set
$A(s,r):=T(s)\setminus T(r)$.

If $T(2d)=\emptyset$, then it is obvious $\diam(T)\le 2d$. Otherwise,
$T(2d)\neq\emptyset$ and we continue with following steps.

{\bf Claim 1.} $|T(2d)|\le C_2d^n$, where $C_2=C(C_0,n,p,\Omega,B_0)$. Indeed, set
$$
u(x):=\ \
\begin{cases}
0, \hspace{2cm}  \forall x\in \Omega\setminus T(d);\\
1, \hspace{2cm}  \forall x\in T(2d);\\
\frac{d(x,B(w,d))}{d}, \hspace{0.8cm} \forall x\in A(d,2d).
\end{cases}
$$
Then $u$ is a Lipschitz function that vanishes on $B_0$. The inequality
\eqref{2.1} implies that
\begin{equation}\label{2.2}
|T(2d)|^{p/q}\le\left(\int_\Omega |u(x)|^q\,dx\right)^{\frac{p}{q}} \le
C_1\int_\Omega |\nabla u(x)|^p\ro(x)^{b}\,dx\le
\frac{C_1}{d^p}\int_{A(d,2d)} \ro(x)^{b}\,dx.
\end{equation}
As $B(w,d)$ separates $\Omega$, it follows that
$\ro(x)\le 2d $ for each $x\in B(w,d)$, and hence $\ro(x)\le 4d $ for each $x\in B(w,2d)$.
Thus \eqref{2.2} implies that
\begin{equation*}
|T(2d)|^{p/q}\le 4^b d^{b-p}C_1\int_{A(d,2d)}\,dx\le 4^{b+n}C_1 \omega_n d^{n+b-p},
\end{equation*}
where $\omega_n$ is the volume of the unit ball. As $b=p(\frac nq+1-\frac np)$, Claim 1 follows with
$C_2:=(4^{b+n}C_1 \omega_n)^{q/p}.$

Let $r_0:=2d$, for each $j\ge 1$, choose $r_j>r_{j-1}$ such that
$|T(r_j)|=2^{-j}|T(2d)|$. Thus $|A(r_{j-1},r_j)|=2^{-j}|T(2d)|$.

{\bf Claim 2.} For each $j\ge 1$, $|r_j-r_{j-1}|\le C_32^{-j/n}d$, where $C_3=C(C_0,n,p,q,\Omega,B_0)$.
To prove this, let us consider two cases.

{\bf Case 1.} If there exists $x_j\in A(r_{j-1},r_j)$ such that $\ro(x_j)> C_4 2^{-j/n}d$, where
$C_4:=(6C_2/\omega_n)^{1/n}$, then  $|r_j-r_{j-1}|\le 2C_4 2^{-j/n}d$.

Notice that $T$ is a component of $\Omega\setminus B(w,d)$, and $B(x_j,C_4 2^{-j/n}d)\subset \Omega$ with center
$x_j\in A(r_{j-1},r_j)$. Thus $B(x_j,C_4 2^{-j/n}d)\setminus B(w,r_{j-1})$ is a subset of $T$, which implies that
the set $B(x_j,C_4 2^{-j/n}d)\cap (B(w,r_j)\setminus B(w,r_{j-1}))$ is a subset of $A(r_{j-1},r_j)$.

Suppose towards a contradiction that $|r_j-r_{j-1}|> 2C_4 2^{-j/n}d$. Then as $x_j\in A(r_{j-1},r_j)$, it follows that at least one third of
$B(x_j,C_4 2^{-j/n}d)$ is contained in $A(r_{j-1},r_j)$. We then have
$$|A(r_{j-1},r_j)|\ge \frac 13|B(x_j,C_4 2^{-j/n}d)|\ge \frac13 C_4^n 2^{-j}\omega_n d^n\ge  2^{1-j}C_2d^n
> 2^{-j}|T(2d)|=|A(r_{j-1},r_j)|,$$
which is a contradiction. This implies that $|r_j-r_{j-1}|\le 2C_4 2^{-j/n}d$.

{\bf Case 2.} If for each $x\in A(r_{j-1},r_j)$, it holds that $\ro(x)\le C_4 2^{-j/n}d$,
then $|r_j-r_{j-1}|\le (C_1C_2^{1-\frac pq} C_4^b)^{1/p}  2^{-j/n}d$.

In this case, similarly to Claim 1, we set
$$
u(x):=\ \
\begin{cases}
0, \hspace{2cm}  \forall x\in \Omega\setminus T(r_{j-1});\\
1, \hspace{2cm}  \forall x\in T(r_{j});\\
\frac{d(x,B(w,r_{j-1}))}{r_j-r_{j-1}}, \hspace{0.5cm} \forall x\in A(r_{j-1},r_j),
\end{cases}
$$
and use the  inequality \eqref{2.1} to obtain
\begin{eqnarray*}
|T(r_{j})|^{p/q}&&\le\lf(\int_\Omega |u(x)|^q\,dx\r)^{p/q}
\le C_1\int_{A(r_{j-1},r_j)} \frac{\ro(x)^{b}}{|r_j-r_{j-1}|^p}\,dx \\
&&\le \frac{C_1C_4^b 2^{-jb/n}d^b}{|r_j-r_{j-1}|^p}|A(r_{j-1},r_j)|,
\end{eqnarray*}
which together with the facts $q\ge p$, $b=p(\frac nq+1-\frac np)$ and
$|T(r_{j})|=|A(r_{j-1},r_j)|=2^{-j}|T(2d)|\le 2^{-j}C_2d^n$ implies that
\begin{eqnarray*}
|r_j-r_{j-1}|^p&&\le C_1C_4^b 2^{-jb/n}d^b|A(r_{j-1},r_j)|^{1-\frac pq}\le
C_1C_4^b 2^{-j\frac bn-j(1-\frac pq) }d^{b+n(1-\frac pq)}C_2^{1-\frac pq}\\
&&\le C_1C_2^{1-\frac pq} C_4^b 2^{-jp/n}d^{p}.
\end{eqnarray*}
Hence, Claim 2 follows  with $C_3:=\max\{2C_4, (C_1C_2^{1-\frac pq} C_4^b)^{1/p} \}.$
Moreover, notice that $C_1$, $C_2$, $C_4$ and hence $C_3$
depend only on  $C_0,n,p,q,\Omega,B_0$.

By using Claim 2, we finally obtain that
$$\diam (T)\le 2d+\sum_{j\ge 1} |r_j-r_{j_1}|\le Cd,$$
where $C=C(C_0,n,p,q,\Omega,B_0)$, which completes the proof.
\end{proof}

\begin{proof}[Proof of Theorem \ref{t2.1}]
If $\Omega$ is a John domain, then from \cite[Theorem 2.1]{km00}
it follows that the $(P_{p,q,b})$ holds for all Sobolev triples $(p,q,b)$
satisfying $1\le p\le q<\fz$, $\frac nq+1-\frac np\ge 0$
and $b=p(\frac nq+1-\frac np)$; also see  \cite{hk98}.

For the converse we employ the argument from
\cite[Proof of Theorem 1.1]{bk95} via Proposition \ref{p2.1}
We sketch the proof for the sake of completeness.

Suppose that $(P_{p,q,b})$ holds for a Sobolev triple $(p,q,b)$.
Fix  $x\in\Omega$ and pick a curve $\gz:[0, 1]\to \Omega$ with $\gz(0)= x$,
$\gz(1)= x_0$ as in Definition \ref{d2.1}. According to \cite[pp. 385-386]{ms79}
and \cite[pp. 7-8]{nv91}, it is enough to show that $\diam(\gz([0,t]))\le C\ro(\gz(t))$.

Let $C_s$ be a constant as in Definition \ref{d2.1}.
If $\gz([0,t])\subset B(\gz(t), C_s\ro(\gz(t)))$, the conclusion
is obvious.

Otherwise the separation property implies that
$\partial B:=\partial B(\gz(t), C_s\ro(\gz(t)))$ separates
$\gz([0, t])\setminus B$ from $x_0$. Let us consider two cases.

{\bf Case 1.} If $B\cap B_0\neq \emptyset$, where $B_0:=B(x_0, \ro(x_0)/2 )$, then
$\gz([0,t])\subset B(\gz(t), C_5\ro(\gz(t)))$ with $C_5=\frac{2\diam(\Omega)}{\ro(x_0)}C_s $.
Indeed, as $B\cap \partial \Omega\neq \emptyset$ and $B\cap B(x_0, \ro(x_0)/2 ) \neq \emptyset$,
it follows  $C_s\ro(\gz(t))\ge \ro(x_0)/2$. Hence
$\gz([0,t])\subset B(\gz(t), \diam (\Omega))\subset  B(\gz(t), C_5\ro(\gz(t)))$.

{\bf Case 2.} If $B\cap B_0=\emptyset$, then
$\gz([0,t])\subset B(\gz(t), C_6\ro(\gz(t)))$, where $C_6$ depends only on
$C_0,n,p,q,\Omega,B_0, C_s$. Let $T$ be the component containing $\gz([0, t])\setminus B$.
As $B$ separates $\gz([0, t])\setminus B$ from $x_0$, $T$ is a component of
$\Omega\setminus B$ that does not intersect $B_0$. By using Proposition \ref{p2.1},
we see that $\gz([0,t])\subset B(\gz(t), C_6\ro(\gz(t)))$.

By letting $C=\max\{C_s,C_5,C_6\}$, we obtain $\diam(\gz([0,t]))\le C\ro(\gz(t))$,
which completes the proof.
\end{proof}

\section{The divergence equation}
\hskip\parindent
In this section, we study the divergence equation on John domains.

\begin{thm}\label{t3.1}
Let $\Omega$ be a  John domain in $\rr^n$, $n\ge 2$, and let $q\in (1,\fz)$.
Then for each $f\in L^q_0(\Omega)$, there exists a solution
$\mathbf{u}\in W^{1,q}_0(\Omega)^n$ to the equation $\mathrm{div}\,\mathbf{u}=f$ in $\Omega$.
Moreover, there exists a constant $C>0$, independent of $f$, such that
\begin{equation}\label{3.1}
\lf\|\frac{\mathbf{u}}{\ro}\r\|_{L^{q}(\Omega)^n}+\|D \mathbf{u}\|_{L^q(\Omega)^{n\times n}}\le C\|f\|_{L^q(\Omega)}.
\end{equation}
\end{thm}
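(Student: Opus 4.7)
The plan is to adapt the constructive Bogovskii-type approach of Acosta-Dur\'an-Muschietti \cite{adm06} to the John setting so that the weighted bound falls out directly from the local construction. First I take a Whitney decomposition $\{Q_j\}_{j\ge 0}$ of $\Omega$ into essentially disjoint dyadic cubes with $\ell(Q_j)\asymp \ro(x)$ for $x\in Q_j$, distinguish a central cube $Q_0$, and, using the John property, assign to each $Q_j$ a chain $\mathcal{F}_j=(Q_{j,0},\dots,Q_{j,N_j})$ of Whitney cubes with $Q_{j,0}=Q_j$ and $Q_{j,N_j}=Q_0$, consecutive cubes overlapping essentially and having comparable sidelength. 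The key structural input is the following Boman/shadow inequality, valid for every $q\in(1,\fz)$ and all nonnegative $\{a_k\}$:
$$
\sum_j\Bigl(\sum_{Q_k\in\mathcal{F}_j}a_k\Bigr)^q|Q_j|\le C\sum_k a_k^q|Q_k|.
$$

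Given $f\in L^q_0(\Omega)$, I next split $f=\sum_k g_k$ with each $g_k$ supported on a set $R_k$ consisting of two neighbouring Whitney cubes along some chain and having zero integral on $R_k$. Concretely, writing $m_k:=|Q_k|^{-1}\int_{Q_k}f$, one telescopes the averages $m_k$ back along the chains to $Q_0$ (the terminal constant vanishing since $\int_\Omega f=0$), producing such $g_k$, and the shadow inequality combined with the Hardy-Littlewood maximal theorem then yields $\sum_k\|g_k\|_{L^q(R_k)}^q\le C\|f\|_{L^q(\Omega)}^q$. On each $R_k$ I invoke the classical Bogovskii operator to obtain $\mathbf{u}_k\in W^{1,q}_0(R_k)^n$ with $\mathrm{div}\,\mathbf{u}_k=g_k$ and $\|D\mathbf{u}_k\|_{L^q(R_k)}\le C\|g_k\|_{L^q(R_k)}$, with $C$ independent of $k$ by scale invariance. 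Since $\mathbf{u}_k\in W^{1,q}_0(R_k)^n$, Poincar\'e on $R_k$ gives $\|\mathbf{u}_k\|_{L^q(R_k)}\le C\ell(R_k)\|D\mathbf{u}_k\|_{L^q(R_k)}$, and $\ro\asymp\ell(R_k)$ on $R_k$ upgrades this to $\|\mathbf{u}_k/\ro\|_{L^q(R_k)}\le C\|g_k\|_{L^q(R_k)}$. Setting $\mathbf{u}:=\sum_k\mathbf{u}_k$ (extended by zero outside each $R_k$), the bounded overlap of $\{R_k\}$ together with the summed bound on the $g_k$ yields both estimates in \eqref{3.1}, and $\mathrm{div}\,\mathbf{u}=f$ follows by summation.

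The main obstacle is the shadow inequality displayed above: this is the one place where the John geometry enters in an essential way, since it encodes that the chains $\mathcal{F}_j$ do not concentrate excessive weight on any single Whitney cube, so that the operator transporting the averages $m_k$ back to $Q_0$ is bounded on $L^q$. Everything else reduces to local Bogovskii plus Poincar\'e on cubes. A secondary point deserving care is verifying that the telescoping decomposition is convergent in $L^q(\Omega)$ and that the $R_k$ have uniformly bounded overlap, so that summing local $L^q$-quantities costs no more than the constant already absorbed by the shadow inequality.
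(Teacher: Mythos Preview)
Your approach is essentially the same as the paper's: decompose $f$ into local mean-zero pieces supported on Whitney-scale sets, solve each piece with a local Bogovskii operator, and sum using bounded overlap, so that the weighted bound $\|\mathbf{u}/\rho\|_{L^q}\le C\|f\|_{L^q}$ follows immediately from $\rho\asymp\ell(Q_j)$ on each piece. The only difference is packaging: the paper invokes the Diening--R\r{u}\v{z}i\v{c}ka--Schumacher decomposition \cite{drs10} (Proposition~\ref{p3.1}) as a black box that already delivers $f=\sum_j T_jf$ with $T_jf\in L^q_0(\sigma Q_j)$ and $\sum_j\|T_jf\|_{L^q}^q\le C\|f\|_{L^q}^q$, whereas you reconstruct this explicitly via the Acosta--Dur\'an--Muschietti chain/telescoping argument and the shadow inequality. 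Your route is slightly more self-contained; the paper's is shorter. One small point: the paper spends a paragraph verifying $\mathbf{u}\in W^{1,q}_0(\Omega)^n$ by approximating $f$ with $L^\infty$ data, but in your setup this is automatic since each $\mathbf{u}_k\in W^{1,q}_0(R_k)^n\subset W^{1,q}_0(\Omega)^n$ and the partial sums converge in $W^{1,q}$ by bounded overlap.
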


\begin{rem}\label{r3.1}\rm
Notice that on a bounded domain $\Omega$, for $q>n$, if $\mathbf{u}\in W^{1,q}_0(\Omega)^n$,
then from the Hardy inequality $(H_q)$  it follows that
$$\lf\|\frac{\mathbf{u}}{\ro}\r\|_{L^{q}(\Omega)^n}\le
C\|D \mathbf{u}\|_{L^q(\Omega)};$$
see Lemma \ref{l4.1} below. Thus the case $q>n$ in Theorem \ref{t3.1}
follows directly from Acosta et al. \cite{adm06}.

However, for $q\le n$, the Hardy inequality may fail even on a
John domain. For instance, the domain $B(0,1)\setminus \{0\}$ does not admit the
$n$-Hardy inequality, but it is a John domain; see \cite{kl09}.
Thus the main improvement in Theorem \ref{t3.1} is that for $q\in(1,n]$, there are
solutions $\mathbf{u}$  belong to $W^{1,q}_0(\Omega)^n$
and satisfying \eqref{3.1}.
\end{rem}

For the proof, we need the following geometric decomposition from \cite{drs10}.

\begin{prop}[\cite{drs10}]\label{p3.1}
Let $\Omega$ be a  John domain in $\rr^n$, $n\ge 2$.
Let $\{Q_j\}_j$ be a Whitney decomposition of $\Omega$.
Then there exists $\sz>1$ and a family of linear operators
$\{T_j\}_{j}$ such that for all $p\in (1,\fz)$ and all $f\in L^p_0(\Omega)$:

(i) $\sum_j \chi_{\sz Q_j}\le C\chi_{\Omega}$;

(ii) $\supp T_jf\subset \sz Q_j$ and $T_jf\in L^p_0(\sz Q_j)$;

(iii) $f =\sum_{j\in I} T_jf$ in $L^p(\Omega)$;

(iv) $\sum_{j\in I} \int_{2Q_j}|T_jf(x)|^p\,dx\le C\int_\Omega |f(x)|^p\,dx$
for some $C=C(p,n)>0$.
\end{prop}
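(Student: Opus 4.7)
The plan is to use Proposition \ref{p3.1} to reduce the global solvability on $\Omega$ to local solvability on the individual dilated Whitney cubes $\sz Q_j$, where the divergence equation with zero boundary data is classical. The steps in order are: (1) decompose $f=\sum_j T_j f$ via Proposition \ref{p3.1}; (2) for each $j$, use the classical (Bogovskii) construction on the Lipschitz domain $\sz Q_j$ to find $\mathbf{u}_j\in W^{1,q}_0(\sz Q_j)^n$ with $\mathrm{div}\,\mathbf{u}_j=T_j f$; (3) extend each $\mathbf{u}_j$ by zero to $\Omega$ and set $\mathbf{u}:=\sum_j\mathbf{u}_j$; (4) verify \eqref{3.1} and the $W^{1,q}_0$-convergence of the series by combining the local estimates with the bounded overlap property (i) and the $\ell^q$-summability (iv).

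In step (2), scaling to a unit reference cube yields $\|D\mathbf{u}_j\|_{L^q(\sz Q_j)^{n\times n}}\le c_q\|T_j f\|_{L^q(\sz Q_j)}$ with $c_q$ independent of $j$. For the gradient part of \eqref{3.1}, property (i) of Proposition \ref{p3.1} gives the pointwise inequality $|D\mathbf{u}(x)|^q\le C\sum_j|D\mathbf{u}_j(x)|^q$ (since only boundedly many summands are nonzero at each $x$), so
$$\|D\mathbf{u}\|_{L^q(\Omega)^{n\times n}}^q\le C\sum_j\|D\mathbf{u}_j\|_{L^q(\sz Q_j)^{n\times n}}^q\le C\sum_j\|T_j f\|_{L^q(\sz Q_j)}^q\le C\|f\|_{L^q(\Omega)}^q.$$
This simultaneously shows the partial sums $\sum_{j\le N}\mathbf{u}_j$ are Cauchy in $W^{1,q}_0(\Omega)^n$, so the limit $\mathbf{u}$ lies there and $\mathrm{div}\,\mathbf{u}=\sum_j T_j f = f$ in the distributional sense.

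For the weighted part, the key observation is that on $\sz Q_j$ the distance $\ro(x)$ is comparable to the sidelength $\ell(Q_j)$, with constants depending only on $n$ and $\sz$, because $Q_j$ is a Whitney cube and $\sz Q_j\subset\Omega$. Combined with the Poincar\'e inequality for functions vanishing on $\partial(\sz Q_j)$, this gives the local estimate
$$\lf\|\mathbf{u}_j/\ro\r\|_{L^q(\sz Q_j)^n}\le C\ell(Q_j)^{-1}\|\mathbf{u}_j\|_{L^q(\sz Q_j)^n}\le C\|D\mathbf{u}_j\|_{L^q(\sz Q_j)^{n\times n}}.$$
Summing over $j$ with bounded overlap and invoking the gradient bound already established yields the weighted part of \eqref{3.1}.

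The main obstacle is conceptually modest but requires care: making the local-to-global passage fully uniform in $j$. Concretely, one must verify that (a) Bogovskii's solvability constant on $\sz Q_j$ is independent of $j$, (b) the $W^{1,q}_0$-Poincar\'e constant on $\sz Q_j$ carries the correct $\ell(Q_j)$ scaling, and (c) $\ro\sim\ell(Q_j)$ uniformly on $\sz Q_j$. All three reduce to elementary scaling arguments from a unit reference cube; the substantive structural input, namely that $f$ admits a decomposition into zero-mean pieces supported on dilated Whitney cubes with bounded overlap and $\ell^q$-control, is carried entirely by Proposition \ref{p3.1}.
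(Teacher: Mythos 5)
Your proposal does not prove the statement at hand. The statement to be established is Proposition \ref{p3.1} itself, i.e., the existence of the dilation factor $\sz$ and of the linear operators $T_j$ satisfying (i)--(iv); your argument instead takes Proposition \ref{p3.1} as given and uses it to deduce the solvability of the divergence equation with the estimate \eqref{3.1}, which is the content of Theorem \ref{t3.1}, not of the proposition. As an argument for Proposition \ref{p3.1} this is circular: every substantive ingredient you rely on --- the zero-mean localized pieces $T_jf$ supported on $\sz Q_j$, the bounded overlap (i), the convergence (iii), and above all the $\ell^p$-control (iv) --- is precisely what was to be constructed, and you explicitly outsource all of it to ``Proposition \ref{p3.1}.'' (For what it is worth, the paper itself does not prove this proposition either; it imports it verbatim from Diening et al.~\cite{drs10}. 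But a proof attempt must supply the construction rather than invoke the result.)

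What is actually needed is the construction of the $T_j$ and the verification of (iv), and this is where the John geometry enters in an essential way. The standard route, following \cite{drs10}, is: fix a central Whitney cube $Q_{j_0}$ containing the John center $x_0$ and use the twisted-cone condition to organize the Whitney cubes into a tree rooted at $Q_{j_0}$, with chains of pairwise intersecting cubes of comparable size joining each $Q_j$ to the root; the John condition guarantees that the ``shadow'' $W_j$ of $Q_j$ (the union of all cubes whose chain passes through $Q_j$) satisfies $\diam(W_j)\le C\ell(Q_j)$. One then defines $T_jf$ by localizing $f$ to $Q_j$ and correcting its mean by transferring, along the chains, the accumulated mass of $f$ over $W_j$ into a bump supported in $\sz Q_j$, so that each $T_jf$ has zero mean, the sum telescopes to $f$, and the correction term is pointwise dominated by $|Q_j|^{-1}\int_{W_j}|f|\,\chi_{\sz Q_j}$. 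Property (iv) then reduces to the $L^p$-boundedness of the resulting chain operator, which follows from the shadow estimate together with a maximal-function argument. None of this appears in your proposal; the local Bogovskii solvability, the scaling of the Poincar\'e constant, and the comparability $\ro\sim\ell(Q_j)$ on $\sz Q_j$ that you do discuss are the easy, purely local parts of the subsequent Theorem \ref{t3.1}, not of the decomposition itself.
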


\begin{rem}\rm
Notice that Duran et al.  \cite{dmrt10} also give an atomic decomposition via
functional analysis; while the decomposition in Proposition
\ref{p3.1} uses the geometric structure of $\Omega$, and does not depend on $p$.
\end{rem}

\begin{proof}[Proof of Theorem \ref{t3.1}]
As discussed in Remark \ref{r3.1}, we only need to consider the case $q\le n$.

Suppose $f\in L^q_0(\Omega)$. We may choose a sequence $\{f_k\}_{k=1}^\fz\in
L^q_0(\Omega)\cap L^\fz(\Omega)$ such that $f_k\to f$ in $L^q_0(\Omega)$.
Let $\{Q_j \}_j$ be a Whitney covering of $\Omega$ as in Proposition \ref{p3.1}.
Applying Proposition \ref{p3.1} to
each $f_k$, we see that $f_k=\sum_j T_jf_k$, where the decomposition holds
in both $L^q_0(\Omega)$ and $L^{2n}_0(\Omega)$. The same conclusion holds for  $f=\sum_j T_jf$  in $L^q_0(\Omega).$

By using \cite[Theorem 2]{bb03} (see also \cite[Theorem 5.2]{drs10}),
on each cube $Q\subset \rn$, there exists a linear operator $S$ that maps
$L^p_0(Q)$ into $W^{1,p}_0(Q)$ for all $p\in (1,\fz)$, such that for
each $g\in L^p_0(Q)$, $\mathrm{div}\,(S g)=g$ and
\begin{equation*}
\|D (S g)\|_{L^p(Q)}\le C(n,q)\|g\|_{L^p(Q){n\times n}}.
\end{equation*}
Thus, by a translation and scaling argument,
it follows that for each $j$, there exist a linear operator
$S\!_j$ that maps $L^p_0(\sz Q_j)$ into $W^{1,p}_0(\sz Q_j)^n$ for each $p\in (1,\fz)$,
and so that $\mathrm{div}S\!_j T_j g= T_j g$  and
\begin{equation*}
\|DS\!_j T_jg\|_{L^p(\sz Q_j){n\times n}}\le C(n,p)\|T_jg\|_{L^p(\sz Q_j)}
\end{equation*}
for every $g\in L^p_0(\Omega)$.

Write $\mathbf{u}(x):=\sum_{j\in I}S\!_jT_jf(x)$ and $\mathbf{u}_k(x):=\sum_{j\in I}S\!_jT_jf_k(x)$.
As $\sum_{j\in I} \chi_{\sigma Q_j}\leq C \chi_{\Omega},$  we
have $\mathbf{u}, \mathbf{u}_k\in W^{1,q}(\Omega)^n$, with
\begin{eqnarray*}
  \int_\Omega |D\mathbf{u}(x)|^q\,dx&&
  \le C\sum_{j\in I}\int_{\sz Q_j}|D(S\!_jT_j)f(x)|^q\,dx\\
  &&\le C\sum_{j\in I}\int_{\sz Q_j}|T_jf(x)|^q\,dx\le C\int_\Omega|f(x)|^q\,dx.
\end{eqnarray*}
 Moreover,
\begin{eqnarray*}
  \int_\Omega \frac{|\mathbf{u}(x)|^q}{\ro(x)^q}\,dx&&
  \le C\sum_{j\in I}\ell(Q_j)^{-q}\int_{2Q_j}|S\!_jT_jf(x)|^q\,dx\\
  &&\le C\sum_{j\in I}\int_{2Q_j}|T_jf(x)|^q\,dx\le C\int_\Omega|f(x)|^q\,dx.
\end{eqnarray*}
The above two estimates prove \eqref{3.1}.

It remains to show that $\mathbf{u}(x)\in W^{1,q}_0(\Omega)^n$. Since $f_k\in L^\fz(\Omega)$,
 the Sobolev embedding theorem ensures  that
\begin{eqnarray*}
\|S\!_jT_jf_k\|_{L^\fz(\sz Q_j)}\le C\ell(Q_j)^{1/2}\|T_jf_k\|_{L^{2n}(2Q_j)}\le C\ell(Q_j)^{1/2}\|f_k\|_{L^{2n}(\Omega)},
\end{eqnarray*}
and hence, $|\mathbf{u}_k(x)|\le  C\ro(x)^{1/2}\to 0$ as $x\to \partial \Omega$, which implies that $\mathbf{u}_k\in W^{1,q}_0(\Omega)^n$.

As $f_k\to f$ in $L^q_0(\Omega)$, we finally obtain
\begin{eqnarray*}
\|\mathbf{u}_k-\mathbf{u}\|^q_{W^{1,q}(\Omega)}&&\le C\sum_j
\|S\!_jT_jf_k-S\!_jT_jf\|^q_{W^{1,q}(\Omega)}\le C\sum_j \|T_j(f_k-f)\|^q_{L^q_0(\Omega)}\\
&&\le C\|f_k-f\|_{L^q_0(\Omega)}\to 0,
\end{eqnarray*}
as $k\to\fz$. Thus $\mathbf{u}_k\in W^{1,q}_0(\Omega)$
implies $\mathbf{u}\in W^{1,q}_0(\Omega)$. The proof is complete.
\end{proof}


\section{Proof of Theorem \ref{t1.1}}
\hskip\parindent
In this section, we prove Theorem \ref{t1.1}.
We need the following Hardy inequality; see \cite{an86,ha99,kl09} for instance.
\begin{lem}[Hardy inequality]\label{l4.1}
Let $\Omega$ be a bounded domain in $\rn$, $n\ge 2$.
If $p>n$, then there exists $C>0$ such that the Hardy inequality $(H_{p})$ holds
for every $v\in W^{1,p}_0(\Omega)$,
\begin{equation}
\label{hardy}
\int_\Omega\frac{|v(x)|^p}{\ro(x)^p}\,dx\le C\int_\Omega|\nabla v(x) |^p\,dx. \tag{$H_{p}$}
\end{equation}
\end{lem}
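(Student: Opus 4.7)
The plan is to combine Morrey's embedding (which requires $p>n$) with a Whitney decomposition of $\Omega$. First I would reduce to $v\in C^\infty_c(\Omega)$ by the density of smooth compactly supported functions in $W^{1,p}_0(\Omega)$; the inequality then passes to all of $W^{1,p}_0(\Omega)$ by Fatou's lemma. After the reduction, I extend $v$ by zero to $\rn$. Since $p>n$, this extended function is continuous on $\rn$ and vanishes identically on $\rn\setminus\Omega$, in particular on $\partial\Omega$.

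Next I would fix a Whitney decomposition $\{Q_j\}_j$ of $\Omega$. Writing $x_j$ for the center and $\ell_j$ for the sidelength of $Q_j$, I form enlarged balls $B_j:=B(x_j, C_1\ell_j)$ with $C_1$ a fixed constant large enough that $B_j\cap\partial\Omega\neq\emptyset$; this is possible because $\dist(Q_j,\partial\Omega)$ is comparable to $\ell_j$ for a Whitney cube. For each $j$ I pick some $y_j\in B_j\cap\partial\Omega$ and note that $v(y_j)=0$. For any $x\in Q_j$ both $x$ and $y_j$ lie in $B_j$, so Morrey's inequality applied on the ball $B_j$ gives
$$|v(x)|=|v(x)-v(y_j)|\le C\ell_j^{1-n/p}\lf(\int_{B_j}|\nabla v(z)|^p\,dz\r)^{1/p}.$$
Raising to the $p$-th power, dividing by $\ro(x)^p$ (which is comparable to $\ell_j^p$ on $Q_j$), and integrating over $Q_j$ (whose volume is $\ell_j^n$), I obtain $\int_{Q_j}|v|^p\ro^{-p}\,dx\le C\int_{B_j}|\nabla v|^p\,dz$. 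Summing over $j$ and invoking the bounded overlap of the enlarged balls $\{B_j\}_j$---standard for a Whitney decomposition when the dilation factor is a fixed constant---bounds the right-hand side by $C\int_\Omega|\nabla v|^p\,dz$, yielding $(H_p)$.

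The main obstacle is the Morrey step, which is exactly where the hypothesis $p>n$ enters: Morrey's embedding makes $v$ continuous on $\rn$, so the single boundary value $v(y_j)=0$ suffices to control $v$ pointwise on all of $B_j$. In the regime $p\le n$, Sobolev functions need not be continuous, and indeed the Hardy inequality can fail even on John domains (see Remark \ref{r3.1}), so no argument relying on pointwise boundary values can work there. Beyond the Morrey step, the rest is routine bookkeeping with Whitney cubes.
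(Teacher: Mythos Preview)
The paper does not give a proof of Lemma \ref{l4.1}; it merely states the inequality and refers to \cite{an86,ha99,kl09}. So there is no in-paper argument to compare with, and I assess your proposal on its own merits.

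Your argument has a genuine gap at the summation step. The assertion that the enlarged balls $\{B_j\}_j$ have bounded overlap is \emph{false} once the dilation constant $C_1$ is taken large enough for $B_j$ to meet $\partial\Omega$. Bounded overlap of dilated Whitney cubes is standard only when the dilates stay inside $\Omega$, because then two intersecting dilates are forced (via comparability with $\ro$) to have comparable sidelengths. When $B_j$ reaches across $\partial\Omega$, this breaks down: for $z\in\Omega$ one only gets the one-sided bound $\ell_j\ge c\,\ro(z)$ from $z\in B_j$, never an upper bound on $\ell_j$, so $z$ can lie in $B_j$ for Whitney cubes at every dyadic scale between $c\,\ro(z)$ and $\diam(\Omega)$, i.e.\ in roughly $\log\big(\diam(\Omega)/\ro(z)\big)$ of them. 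Concretely, for $\Omega=(0,1)^n$ and $z=(\ez,\tfrac12,\ldots,\tfrac12)$, the balls $B_j$ attached to the Whitney cubes stacked above $z$ at heights $2^{-k}$ all contain $z$ once $C_1$ exceeds a fixed threshold. Hence $\sum_j\int_{B_j}|\nabla v|^p$ cannot be bounded by a constant times $\int_\Omega|\nabla v|^p$ with the constant independent of $v$.

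The Morrey idea is correct; what is missing is a device to avoid the bad overlap. The clean fix (essentially the argument in \cite{ha99}) is to apply Morrey with an auxiliary exponent $q\in(n,p)$ directly at the point $x$: choosing $y\in\partial\Omega$ with $|x-y|\le 2\ro(x)$ one obtains
\[
|v(x)|\le C\,\ro(x)\Big(\ro(x)^{-n}\!\int_{B(x,2\ro(x))}|\nabla v|^q\,dz\Big)^{1/q}\le C\,\ro(x)\big(M(|\nabla v|^q)(x)\big)^{1/q},
\]
and then the $L^{p/q}$-boundedness of the Hardy--Littlewood maximal operator (valid since $p/q>1$) yields $(H_p)$ after raising to the $p$-th power and integrating. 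Using $q=p$ here would land you on the unbounded $L^1$-endpoint of the maximal inequality, which is exactly the same obstruction that appears as unbounded overlap in your Whitney formulation.
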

%
%

\begin{proof}[Proof of Theorem \ref{t1.1}]
Given a John domain $\Omega$, from \cite{adm06} it follows that (ii) holds;
from Theorem \ref{t3.1} it follows that (iii) holds and hence (iv) holds.

Conversely, suppose that separation property holds on $\Omega$.
Let us first show that (ii) implies (i).

In the case $p\in (1,n)$,  it follows from \cite{adm06}  that $\Omega$ is a John
domain.
Suppose now $p\in (n,\fz)$.
Thus, for each $f\in L^p_0(\Omega)$, there is
$\mathbf{u}\in W^{1,p}_0(\Omega)^n$ satisfying $\mathrm{div}\, \mathbf{u}=f$ and
$$\|D\mathbf{u}\|_{L^p(\Omega)^{n\times n}}\le C\|f\|_{L^p(\Omega)}.$$

Applying the Hardy inequality \eqref{hardy} to $\mathbf{u}\in W^{1,p}_0(\Omega)^p$, $p>n$, we see that
$$\int_\Omega\frac{|\mathbf{u}(x)|^p}{\ro(x)^p}\,dx\le c\int_\Omega|D\mathbf{u}(x)|^p\,dx
\le C\int_\Omega|f(x)|^p\,dx.$$

Next, for each $u\in  W^{1,p'}(\Omega)$ and each $f\in L^{p}(\Omega)$, where $1/p'+1/p=1$, it follows that
\begin{eqnarray}\label{4.1}
  \lf|\int_\Omega f(x)(u(x)-u_{\Omega})\,dx\r|&&=\lf|\int_\Omega (f(x)-f_\Omega)(u(x)-u_{\Omega})\,dx\r|
  \\&&=\lf|\int_\Omega \mathbf{u(x)}\cdot \nabla (u(x)-u_{\Omega})\,dx\r|\nonumber\\
  &&\le \lf(\int_\Omega\frac{|\mathbf{u}(x)|^p}{\ro(x)^p}\,dx\r)^{1/p}
  \lf(\int_\Omega|\nabla u(x)|^{p'}\ro(x)^{p'}\,dx\r)^{1/p'}\nonumber\\
  &&\le C\lf(\int_\Omega|f(x)|^p\,dx\r)^{1/p}
  \lf(\int_\Omega|\nabla u(x)|^{p'}\ro(x)^{p'}\,dx\r)^{1/p'}.\nonumber
  \end{eqnarray}
Taking the supremum over the set $\{f\in L^p(\Omega):\,\|f\|_{L^p(\Omega)}\le 1\}$, we see that
$$\int_\Omega|u(x)-u_{\Omega}|^{p'}\,dx\le C\int_\Omega|\nabla u(x)|^{p'}\ro(x)^{p'}\,dx,
$$
i.e., the $(P_{p'})$-Poincar\'e inequality holds on $\Omega$.
By using Theorem \ref{t2.1} we see that $\Omega$ is a John domain.

Let us show that (iv) implies (i), which further implies that (iii) implies (i).
(iv) implies that for each $f\in L^p_0(\Omega)$, there is
$\mathbf{u}\in L^p(\Omega)^n$ satisfying $\mathrm{div}\,\mathbf{u}=f$ and
$$\lf\|\frac{\mathbf{u}}{\ro}\r\|_{L^p(\Omega)^n}\le C\|f\|_{L^p(\Omega)}.$$
Then, using a duality argument as in \eqref{4.1}, it follows the $(P_{p'})$-Poincar\'e inequality holds on $\Omega$.
Using Theorem \ref{t2.1} again, we see that $\Omega$ is a John domain, which completes the proof.
\end{proof}

Notice Theorem \ref{t1.1} (ii) does not cover the borderline case $p=n$.
For $1<p<n,$ a calculation similar to the one in the
proof of Theorem \ref{t1.1} was done in \cite{adm06} relying on a
Sobolev-Poincar\'e inequality.
This does not work for $p\ge n$ and we use Hardy inequality to bypass
the problem in the case $p>n$. In the case $p=n$ we
cannot rely on such an inequality without additional assumptions.

We can include the case $p=n$ in Theorem \ref{t1.1} (ii) provided
the complement of $\Omega$ is sufficiently thick on $\partial \Omega$. Precisely,
it suffices to assume there exists $\lz>0$ such that
$\mathscr{H}^\lz_\fz(\Omega^c\cap B(w,r))\ge Cr^\lz$ for all $w\in \partial \Omega$ and $r>0$.
Here $\mathscr{H}^\lz_\fz$ denotes $\lz$-dimensional Hausdorff content; see \cite{kl09}.
For example, each simply connected plane domain satisfies this condition.

\newtheorem{thmnew}{Theorem 4.1}
\renewcommand\thethmnew{}

\begin{thmnew}\label{thmnew}
Let $\Omega \subset \rr^n$ be a bounded domain
satisfying the separation property, $n\ge 2$. Suppose that
there exists $\lz>0$ such that
$\mathscr{H}^\lz_\fz(\Omega^c\cap B(w,r))\ge Cr^\lz$ for all $w\in \partial \Omega$ and $r>0$.

Then $\Omega$ is a John domain if and only if
for some (all) $p\in (1,\fz)$ and each $f\in L^p_0(\Omega)$, there exists a solution $\mathbf{v}\in W^{1,p}_0(\Omega)^n$
to the equation $\mathrm{div}\,\mathbf{v}=f$ with
$$\|\mathbf{v}\|_{W^{1,p}(\Omega)^n}\le C\|f\|_{L^p(\Omega)^n}.$$
\end{thmnew}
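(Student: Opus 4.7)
The plan is to leverage the machinery already in place, reducing the novelty to one ingredient: the critical $n$-Hardy inequality. The implication from $\Omega$ being John to solvability in $W^{1,p}_0(\Omega)^n$ for every $p\in(1,\fz)$, including the endpoint $p=n$, is supplied directly by Theorem \ref{t3.1}: since $\Omega$ is bounded, $\ro$ is bounded above and the estimate \eqref{3.1} implies the required $W^{1,p}$-bound. The entire burden therefore rests on the converse: one must show that, under the separation property together with the Hausdorff content thickness assumption, solvability of $\mathrm{div}_{n,0}$ in $W^{1,n}_0(\Omega)^n$ forces $\Omega$ to be John.

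The first step is to invoke the critical Hardy inequality of Koskela and Lehrb\"ack \cite{kl09}: the hypothesis $\mathscr{H}^\lz_\fz(\Omega^c\cap B(w,r))\ge Cr^\lz$ is a uniform Hausdorff content density condition, which guarantees the $n$-Hardy inequality $(H_n)$ on $\Omega$,
$$\int_\Omega\frac{|v(x)|^n}{\ro(x)^n}\,dx\le C\int_\Omega|\nabla v(x)|^n\,dx\quad\text{for every }v\in W^{1,n}_0(\Omega).$$
Since Theorem \ref{t1.1} already covers the cases $p\in(1,n)\cup(n,\fz)$, the only new content is the endpoint $p=n$.

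With $(H_n)$ in hand, the argument becomes a verbatim repetition of the $p>n$ portion of the proof of Theorem \ref{t1.1}. Given $f\in L^n_0(\Omega)$, pick $\mathbf{u}\in W^{1,n}_0(\Omega)^n$ with $\mathrm{div}\,\mathbf{u}=f$ and $\|D\mathbf{u}\|_{L^n(\Omega)^{n\times n}}\le C\|f\|_{L^n(\Omega)}$; then $(H_n)$ upgrades this to $\|\mathbf{u}/\ro\|_{L^n(\Omega)^n}\le C\|f\|_{L^n(\Omega)}$. Repeating the duality computation in \eqref{4.1} with $p$ replaced by $n$, for every $u\in W^{1,n'}(\Omega)$ one obtains
$$\lf|\int_\Omega f(x)(u(x)-u_\Omega)\,dx\r|\le C\|f\|_{L^n(\Omega)}\lf(\int_\Omega|\nabla u(x)|^{n'}\ro(x)^{n'}\,dx\r)^{1/n'},$$
and taking the supremum over $\|f\|_{L^n(\Omega)}\le 1$ yields the $(P_{n'})$-Poincar\'e inequality. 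Since $(n',n',n')$ is a valid Sobolev triple (here $p=q=n'$ and $b = n'(\frac{n}{n'}+1-\frac{n}{n'})=n'$), Theorem \ref{t2.1} concludes that $\Omega$ is a John domain, and Theorem \ref{t3.1} then closes the loop between the ``some $p$'' and ``every $p$'' formulations.

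The main obstacle is the borderline Hardy inequality itself: $(H_n)$ can fail on John domains in general (Remark \ref{r3.1}), so the extra assumption on $\Omega^c$ is indispensable and the whole argument hinges on extracting $(H_n)$ from it via \cite{kl09}. Once this is done, no new analytic work is needed beyond recycling the duality/Poincar\'e route of Theorem \ref{t1.1}.
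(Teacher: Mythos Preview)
Your proposal is correct and follows essentially the same route as the paper: reduce to the case $p=n$ via Theorem \ref{t1.1}, invoke the Hausdorff content thickness hypothesis to obtain $(H_n)$ from \cite{l88,kl09}, run the duality computation \eqref{4.1} to deduce the $(P_{n'})=(P_{n/(n-1)})$-Poincar\'e inequality, and conclude John via Theorem \ref{t2.1}. The paper's proof is slightly terser but uses the same ingredients in the same order.
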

\begin{proof} By Theorem \ref{t1.1}, we only need to show that if
the divergence equation $\mathrm{div}_{n,0}$ is solvable, then $\Omega$
is a John domain. In this case, from \cite{l88,kl09}, it follows that
every $v\in W^{1,n}_0(\Omega)$, there is a constant $C$ such that
$$\int_\Omega\frac{|v(x)|^n}{\ro(x)^n}\,dx\le C\int_\Omega|\nabla v(x) |^n\,dx. \leqno(H_{n})$$
Thus, for each $f\in L^n_0(\Omega)$, there exists a solution $\mathbf{v}\in W^{1,n}_0(\Omega)^n$
to the equation $\mathrm{div}\,\mathbf{v}=f$ such that
$$\lf(\int_\Omega\frac{|\mathbf{v}(x)|^n}{\ro(x)^n}\,dx\r)^{1/n}\le c\|\mathbf{v}\|_{W^{1,n}(\Omega)^n}\le C\|f\|_{L^n(\Omega)^n}.$$
Arguing as in \eqref{4.1}, we see that $(P_{\frac{n}{n-1}})$-Poincar\'e inequality
holds on $\Omega$, which implies $\Omega$ is a John domain by Theorem \ref{t2.1}.
\end{proof}

On the other hand, we have the following example.


%

\begin{example}\label{e4.1}\rm
 For each $1<p\le n$, there is John domain $\Omega$ that satisfies the separation property, $f\in L^p_0(\Omega)$ and $\mathbf v\in W^{1,p}_0(\Omega)$
so that $\mathrm{div}\,\mathbf{v}=f$, and
$\|\frac{\mathbf{v}}{\ro}\|_{L^p(\Omega)^n}=\fz$.
\end{example}

For simplicity, we only consider the case $n=2;$ our reasoning easily extends
to cover the higher dimensional case.
Let $p\in (1,2]$ and set $\Omega :=B^2(0,2)\setminus E,$ where $E\subset [0,1]$ is a compact set so that
 $\mathscr{H}^{2-p}(E)<\infty,$ but $\int_{B^2(0,1)} d(x,E)^{-p}dx=\infty.$ Fix $\varphi \in C_0^{\infty}(B(0,2))$
 with $\varphi(x)=x_1$ on $B^2(0,1).$ Then $\mathbf{v}=\nabla \varphi\in W^{1,p}_0(B^2(0,2))^2$  is a solution to $\mathrm{div}\,\mathbf v =\Delta \varphi$
 on $B^2(0,2),$ and in particular,  on $\Omega=B^2(0,2)\setminus E.$ Moreover, $\int_{\Omega} |\frac{\mathbf{v}}{\rho}|^p\,dx=\infty$
 and it is easy to check that  $v\in W^{1,p}_0(\Omega)$ via $\mathscr{H}^{2-p}(E)<\infty,$ and that $\Omega$ satisfies the separation property.


\begin{proof}[Proof of Corollary \ref{c1.1}]
It was proved in \cite{bk95} that each simply connected plane domain satisfies the separation property.
Moreover, it is trivial that for each simply connected plane domain $\Omega$,
$\mathscr{H}^1_\fz(\Omega^c\cap B(w,r))\ge Cr$ for all $w\in \partial \Omega$ and $r>0$.
Hence, $\Omega$ satisfies the requirements for Theorem 4.1, and Corollary \ref{c1.1} follows.
\end{proof}

\subsection*{Acknowledgment}
Jiang and Koskela were supported by the Academy of Finland Grants 131477 and 263850 and Kauranen was supported by The
Finnish National Graduate School in Mathematics and its Applications.

\vspace{0.4cm}

\noindent Renjin Jiang$^{1}$, Aapo Kauranen$^{2}$ \& Pekka Koskela$^{2}$

\

\noindent
1. School of Mathematical Sciences, Beijing Normal University,
Laboratory of Mathematics and Complex Systems, Beijing 100875, People's Republic of China

\

\noindent 2. Department of Mathematics and Statistics, University of Jyv\"{a}skyl\"{a}, P.O. Box 35 (MaD),
FI-40014
Finland

\

\noindent{\it E-mail addresses}:
\texttt{rejiang@bnu.edu.cn}

\hspace{2.3cm}
\texttt{aapo.p.kauranen@jyu.fi}

\hspace{2.3cm}
\texttt{pkoskela@maths.jyu.fi}

\end{document}